\DeclareTextSymbol{\thh}{T1}{254}
\newtheorem{thm}{Theorem}[section]
\newtheorem{prop}[thm]{Proposition}
\newtheorem{cor}[thm]{Corollary}
\theoremstyle{definition}
\newtheorem{df}[thm]{Definition}
\newtheorem{facts}[thm]{Facts}
\newcommand{\R}{\mathbb{R}}
\newcommand{\Z}{\mathbb{Z}}
\newcommand{\curly}[1]{\mathcal{#1}}
\newcommand{\la}{\curly{L}}
\def\indsym#1#2{%
  \setbox0=\hbox{$\m@th#1x$}%
  \kern\wd0%
  \hbox to 0pt{\hss$\m@th#1\mid$\hbox to 0pt{$\m@th#1^{#2}$}\hss}%
  \lower.9\ht0\hbox to 0pt{\hss$\m@th#1\smile$\hss}%
  \kern\wd0}
\def\nindsym#1#2{%
  \setbox0=\hbox{$\m@th#1x$}%
  \kern\wd0%
  \hbox to 0pt{\hss$\m@th#1\not$\kern1.4\wd0\hss}
  \hbox to 0pt{\hss$\m@th#1\mid$\hbox to 0pt{$\m@th#1^{\,#2}$}\hss}%
  \lower.9\ht0\hbox to 0pt{\hss$\m@th#1\smile$\hss}%
  \kern\wd0}
\def\dotminussym#1#2{%
  \setbox0=\hbox{$\m@th#1-$}%
  \kern.5\wd0%
  \hbox to 0pt{\hss\hbox{$\m@th#1-$}\hss}%
  \raise.6\ht0\hbox to 0pt{\hss$\m@th#1.$\hss}%
  \kern.5\wd0}
\def \r { {\mathbb R} }
\def \<{\langle}
\def \>{\rangle}
\def \n {\mathbb N}
\def \*Z {{{^*}\Z}}
\def \((  {(\!(}
\def \)) {)\!)}
\def \int{\operatorname{int}}
\numberwithin{equation}{section}
\def \k{\mathcal{K}}
\def \u{\mathcal{U}}
\def\R{\mathcal R}
\def \Th{\operatorname{Th}}
\begin{document}

\title{Amalgamating $\R^\omega$-embeddable von Neumann algebras}
\author{Ilijas Farah, Isaac Goldbring, Bradd Hart}
\thanks{The authors would like to thank NSERC and the Fields Institute for supporting this work.  Goldbring's work was partially supported by NSF grant DMS-1007144.}

\address{Department of Mathematics and Statistics, York University, 4700 Keele Street, North York, Ontario, Canada, M3J 1P3, and Matematicki Institut, Kneza Mihaila 35, Belgrade, Serbia}
\email{ifarah@mathstat.yorku.ca}
\urladdr{http://www.math.yorku.ca/~ifarah}

\address {Department of Mathematics, Statistics, and Computer Science, University of Illinois at Chicago, Science and Engineering Offices M/C 249, 851 S. Morgan St., Chicago, IL, 60607-7045}
\email{isaac@math.uic.edu}
\urladdr{http://www.math.uic.edu/~isaac}

\address{Department of Mathematics and Statistics, McMaster University, 1280 Main Street W., Hamilton, Ontario, Canada L8S 4K1}
\email{hartb@mcmaster.ca}
\urladdr{http://www.math.mcmaster.ca/~bradd}

\begin{abstract}
We observe how a classical model-theoretic fact proves the existence of many strong amalgamation bases for the class of $\R^\omega$-embeddable von Neumann algebras, where $\R$ is the hyperfinite II$_1$ factor.  In particular, we shows that $\R$ itself is a strong amalgamation base.
\end{abstract}

\maketitle

\section{Introduction}

Throughout this note, $\R$ denotes the hyperfinite II$_1$ factor and all von Neumann algebras under consideration are assumed to be tracial.  We say that a separable von Neumann algebra $A$ is \emph{$\R^\omega$-embeddable} if $A$ embeds into some (equivalently all) nonprincipal ultrapowers of $\R$.

We say that an $\R^\omega$-embeddable von Neumann algebra $A$ is an \emph{amalgamation base for the class of $\R^\omega$-embeddable von Neumann algebras} if whenever $A$ is a subalgebra of the $\R^\omega$-embeddable von Neumann algebras $B$ and $C$, then there is an $\R^\omega$-embeddable von Neumann algebra $D$ and embeddings $f:B\to D$ and $g:C\to D$ such that $f|A=g|A$.  If in addition, using the notation from the previous sentence, we can always find $D$, $f$ and $g$ satisfying $f(B)\cap g(C)=f(A)$, we call $A$ a \emph{strong amalgamation base for the class of $\R^\omega$-embeddable von Neumann algebras}.

For the sake of brevity, we shall simply say that $A$ is a (strong) amalgamation base, omitting the phrase ``for the class of $\R^\omega$-embeddable von Neumann algebras.''  We should stress that if we work in the class of all separable von Neumann algebras, then every object is a strong amalgamation base due to the existence of amalgamated free products.  Thus, our convention should be in the back of one's head at all times throughout this note.

The following result appeared in \cite{BDJ}, resulting from a careful analysis of free entropy dimension in amalgamated free products:
\begin{thm} {\cite[Corollary 4.5]{BDJ}}
Suppose that $M_1,M_2$ are $\R^\omega$-embeddable von Neumann algebras and $M:=M_1*_\R M_2$ is the amalgamated free product.  Then $M$ is $\R^\omega$-embeddable.
\end{thm}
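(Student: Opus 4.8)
The plan is to deduce $\R^\omega$-embeddability of $M:=M_1*_\R M_2$ from the local characterization of embeddability, combined with Voiculescu's asymptotic freeness in its operator-valued form over a finite-dimensional subalgebra of $\R$. Recall that a separable tracial von Neumann algebra $A$ embeds into $\R^\omega$ if and only if $A\models\Th_\forall(\R)$, equivalently, if and only if every finite tuple from $A$ has matricial microstates: for each finite tuple $\bar a$ from $A$, each finite family of $*$-polynomials $p_1,\dots,p_m$, and each $\epsilon>0$, there are $k$ and matrices $\bar A$ over $M_k(\mathbb{C})$ with $|\mathrm{tr}_k(p_j(\bar A))-\tau_A(p_j(\bar a))|<\epsilon$ for all $j$. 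Since $M$ is separable, it is thus enough to produce approximating matricial microstates for an arbitrary finite tuple from $M$.

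Such a tuple is built from finitely many elements $\bar x$ of $M_1$, $\bar y$ of $M_2$, and $\bar e$ of $\R$. Fix an increasing sequence of matrix subalgebras $D_n\subseteq\R$, say $D_n=M_{2^n}(\mathbb{C})$, whose union generates $\R$. I would first invoke the (standard) continuity of the amalgamated free product, namely that $\tau_{M_1*_\R M_2}(w)=\lim_n\tau_{M_1*_{D_n}M_2}(w)$ for every fixed word $w$; this follows from the noncommutative martingale convergence $E_{D_n}\to E_\R$ together with the moment formula for freeness with amalgamation. Hence, for the given $\epsilon$, I may fix $n$ large, abbreviate $D:=D_n$, replace each entry of $\bar e$ by an element of $D$ that is $\|\cdot\|_2$-close to it, and reduce to approximating the scalar joint moments of $(\bar e,\bar x,\bar y)$ as computed inside $M_1*_D M_2$ --- these being determined by the $D$-valued distribution of $\bar x$ over $D\subseteq M_1$, that of $\bar y$ over $D\subseteq M_2$, and freeness with amalgamation over $D$.

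Next I would build the matrices. Because $M_1$ is $\R^\omega$-embeddable and $D$ is finite-dimensional, there are matricial microstates of $(\bar x,\bar e)$ on some $M_k(\mathbb{C})$ in which the given copy of $D=M_d(\mathbb{C})$ is realized exactly as $D\otimes 1$ in a factorization $M_k(\mathbb{C})=M_d(\mathbb{C})\otimes M_{k/d}(\mathbb{C})$, and whose $D$-valued distribution approximates that of $\bar x$ over $D$; likewise for $(\bar y,\bar e)$ on $M_k(\mathbb{C})$ for the same $k$. After conjugating the $\bar y$-microstate by a unitary of $M_k(\mathbb{C})$ (Noether--Skolem), the two copies of $D$ coincide. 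I would then conjugate the $\bar y$-microstate further by a Haar-random unitary $U$ in the commutant $D'\cap M_k(\mathbb{C})=1\otimes M_{k/d}(\mathbb{C})$. By asymptotic freeness with amalgamation over $D$, as $k\to\infty$ the tuples $(\bar E,\bar X)$ and $(\bar E,U\bar Y U^*)$ become free with amalgamation over $D$, so their joint moments converge to those of $(\bar e,\bar x,\bar y)$ in $M_1*_D M_2$; and these approximate the corresponding moments in $M$ since $n$ was chosen large. The resulting matrices are the desired microstates, so $M$ is $\R^\omega$-embeddable.

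The hard step is exactly the operator-valued asymptotic freeness. Conjugating by a full Haar unitary of $M_k(\mathbb{C})$ would give asymptotic freeness over $\mathbb{C}$ but would move the copy of $D$ and destroy the common amalgam, while restricting $U$ to the commutant of $D$ keeps $D$ pointwise fixed --- but one then has to prove that this still yields freeness with amalgamation over $D$, with enough uniformity to let $n$ grow. This is precisely the analysis performed in \cite{BDJ}: there the authors compute the microstates free entropy dimension $\delta_0$ of a generating tuple of $M_1*_\R M_2$ by controlling such conjugations, and the non-emptiness of the microstates space --- i.e.\ $\R^\omega$-embeddability --- comes out as \cite[Corollary 4.5]{BDJ}.
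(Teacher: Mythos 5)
This theorem is stated in the paper as a quotation of \cite[Corollary 4.5]{BDJ}; no proof is given, so there is no internal argument to compare yours against. Your microstates outline has the right shape, but as written it is an outline rather than a proof: the step you yourself identify as the crux --- that conjugating by Haar unitaries in $D'\cap M_k(\mathbb{C})$ produces asymptotic freeness with amalgamation over $D$, with enough uniformity to let $n\to\infty$ --- is simply referred back to \cite{BDJ}. In effect you, like the paper, are citing \cite{BDJ} for the actual content, so the proposal does not yet constitute an independent proof.

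Two concrete remarks if you want to close it up. First, the ``standard continuity'' $\tau_{M_1*_{D_n}M_2}(w)\to\tau_{M_1*_{\R}M_2}(w)$ is where genuine work sits: martingale convergence gives $E_{D_n}(x)\to E_{\R}(x)$ only in $\|\cdot\|_2$, not in operator norm, and you must propagate this through the (finite, but nested) moment expansion for alternating words using $\|ab\|_2\le\|a\|\,\|b\|_2$ and the contractivity of conditional expectations; this is provable but should be written out, not waved at. Second, your ``hard step'' is in fact softer than you make it: for $D=M_d(\mathbb{C})$ a full matrix factor one has $M_i\cong M_d(\mathbb{C})\otimes(D'\cap M_i)$ and $M_1*_{D}M_2\cong M_d(\mathbb{C})\otimes\bigl((D'\cap M_1)*(D'\cap M_2)\bigr)$, so freeness with amalgamation over $D$ reduces to an ordinary scalar free product of the relative commutants, and Voiculescu's classical asymptotic freeness (over $\mathbb{C}$) suffices --- no operator-valued random matrix analysis is needed. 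Since $\R$ is the closure of an increasing union of matrix \emph{factors}, this reduction is available at every stage $n$, and combining it with the continuity lemma yields a complete and rather elementary proof of the embeddability statement, bypassing the free entropy dimension computations of \cite{BDJ} entirely (which are of course needed for the stronger conclusions of that paper, just not for this corollary).
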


In particular, $\R$ is a strong amalgamation base, although the aforementioned result is much more specific, identifying a concrete amalgam, indeed the freest possible amalgam.

In this note, we will show that a large number of $\R^\omega$-embeddable von Neumann algebras are strong amalgamation bases.  This will follow from the classical result of model theory (adapted here to the setting of \emph{continuous model theory}) that any \emph{existentially closed} model of a theory is a strong amalgamation base for the class of models of that theory.  We will then refer to results from \cite{U} which imply that, in the Polish space of separable $\R^\omega$-embeddable von Neumann algebras, the set of existentially closed models is ``large'' in both the topological sense and the measure-theoretic sense.

We should note two nonconstructive features of our observation.  First, we cannot specify exactly what the amalgams look like when considering a strong amalgamation base.  Indeed, the existence of the amalgam follows from the \emph{Compactness Theorem} of (continuous) first-order logic, which is inherently nonconstructive.  (We did attempt to see if one could force our proof below to imply that the amalgamated free product itself is $\R^\omega$-embeddable but were unsuccessful in this endeavor.)  However, this abstract approach has the added benefit of allowing us to deal also with ``nonseparable $\R^\omega$-embeddable von Neumann algebras,'' that is, nonseparable models of the universal theory of $\R$.

Secondly, it might be desirable to know exactly what the existentially closed $\R^\omega$-embeddable von Neumann algebras are.  In this note, we will observe that $\R$ itself is existentially closed, whence recovering the fact that $\R$ is a strong amalgamation base (even allowing nonseparable algebras).  If one were able to axiomatize (in the sense of first-order logic) the existentially closed $\R^\omega$-embeddable von Neumann algebras, then this would have an extremely interesting consequence, namely that the first-order theory of $\R$ is \emph{model-complete}, from which it would follow from \cite[Corollary 3.5]{GHS} that the Connes Embedding Problem (CEP) would have a negative solution.

It is interesting to ask whether or not there is a simple, operator algebraic proof that a large number of $\R^\omega$-embeddable von Neumann algebras are (strong) amalgamation bases.  In relation to the discussion from the previous paragraph, if one could show that \emph{all} $\R^\omega$-embeddable von Neumann algebras are amalagamation bases, then it would follow that the theory of $\R^\omega$-embeddable von Neumann algebras does not have a model companion and that the theory of $\R$ is \emph{not model-complete}, destroying the strategy from \cite{GHS} for providing a negative solution to the CEP.  (Indeed, it is shown in \cite[Proposition 3.2]{GHS} that the only possible model-complete theory of $\R^\omega$-embeddable von Neumann algebras is the theory of $\R$.  If the class of $\R^\omega$-embeddable von Neumann algebras had the amalgamation property, then a model companion would be a model completion, implying that the theory of $\R$ has quantifier elimination, contradicting \cite[Theorem 2.1]{GHS}.)  We discuss this topic in further detail in the last section.

We will work in the setting of continuous model theory.  We refer the reader to \cite{HFS} for a rapid introduction to this setting, where it is also explained how to treat von Neumann algebras as metric structures.  

\section{E.c. models and strong amalgamation bases}

In this section, we let $L$ be a continuous signature and $\k$ a class of $L$-structures.

\begin{df}
We say that $A\in \k$ is an \emph{amalgamation base for $\k$} if whenever $B,C\in \k$ both contain $A$, then there is $D\in \k$ and embeddings $f:B\to D$ and $g:C\to D$ such that $f|A=g|A$.  If, in addition, we can always find $D$, $f$, and $g$ such that $f(B)\cap g(C)=f(A)$, we call $A$ a \emph{strong amalgamation base for $\k$}.
\end{df}

Recall the following definition:

\begin{df}
We say that $A\in \k$ is \emph{existentially closed (e.c.) for $\k$} if for any quantifier-free formula $\varphi(x,y)$, any $a\in A$, and any $B\in \k$ with $A\subseteq B$, we have $$\inf_{c\in A}\varphi^A(c,a)=\inf_{b\in B}\varphi^B(b,a).$$ If $\k$ is the class of models of some theory $T$, we call an e.c. member of $\k$ an \emph{e.c. model of $T$}.
\end{df}

For any $\la$-structure $A$, we let $L(A)$ denote the language $L$ where new constant symbols $c_a$ are added for elements $a\in A$.  We let $D(A)$ denote the atomic diagram of $A$, that is, the set of closed $L(A)$-conditions ``$\sigma(\vec a)=0$,'' where $\sigma(\vec x)$ is a quantifier-free formula, $\vec a$ is a tuple from $A$, and $\sigma^A(\vec a)=0$.  As in classical logic, if $B$ is an $L(A)$-structure that satisfies $D(A)$, then the map sending $a$ to the interpretation of the constant naming $a$ in $B$ is an embedding of $L$-structures.  

We also let $D^+(A)$ denote the set of all closed conditions ``$\sigma(\vec a)\leq \frac{1}{k}$'', where ``$\sigma(\vec a)=0$'' belongs to $D(A)$ and $k\in \n^{>0}$.  Observe that an $L(A)$ structure satisfies $D(A)$ if and only if it satisfies $D^+(A)$.

The following is the continuous logic analog of a classical model-theoretic fact (see \cite[Theorem 3.2.7]{H}, although for some reason there it is assumed that $T$ is $\forall\exists$-axiomatizable, which is surely unnecessary).
\begin{prop}
Suppose that $T$ is an $L$-theory and $A\models T$ is an e.c. model.  Then $A$ is a strong amalgamation base for the models of $T$.
\end{prop}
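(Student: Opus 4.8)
The plan is to use a compactness argument to build the amalgam $D$ directly as a model of a suitable theory in the language $L(B) \cup L(C)$ (identifying the copies of $A$). Suppose $A \models T$ is e.c., and $A \subseteq B$, $A \subseteq C$ with $B, C \models T$. Form the language $L' := L(B) \cup_{L(A)} L(C)$, where we take the disjoint union of the new constants of $B$ and of $C$ but identify the constants $c_a$ coming from $A \subseteq B$ with those coming from $A \subseteq C$. Consider the theory
$$
\Sigma := T \cup D^+(B) \cup D^+(C) \cup \{ d(c_b, c_c) \geq \tfrac{1}{k} : b \in B \setminus A,\ c \in C \setminus A,\ k \in \n^{>0}\}.
$$
(Here the last family of conditions is what will force strong amalgamation: in any model of $\Sigma$, no element of $B \setminus A$ is identified with any element of $C \setminus A$, so the intersection of the images is exactly the image of $A$; one should be slightly careful and phrase this via separation in the metric, but since everything is named by constants this is routine.) A model of $\Sigma$, restricted along the two embeddings of $L(A)$-diagrams, yields $D \models T$ together with embeddings $f : B \to D$ and $g : C \to D$ agreeing on $A$ and with $f(B) \cap g(C) = f(A)$, which is exactly what we want.

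So the whole proof reduces to showing $\Sigma$ is satisfiable, and by the Compactness Theorem of continuous logic it suffices to show every finite subset $\Sigma_0 \subseteq \Sigma$ is satisfiable. A finite $\Sigma_0$ mentions only finitely many constants: a finite tuple $\vec b$ from $B$ and a finite tuple $\vec c$ from $C$; enlarging if necessary we may assume $\vec b$ and $\vec c$ both include names for a finite tuple $\vec a$ from $A$, and that $\Sigma_0$ is implied by $T$ together with finitely many conditions of the form $\sigma(\vec b, \vec a) \leq \tfrac{1}{k}$ (from $D^+(B)$), $\tau(\vec c, \vec a) \leq \tfrac{1}{k}$ (from $D^+(C)$), and the separation conditions $d(b_i, c_j) \geq \tfrac{1}{k}$. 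Since $\sigma^B(\vec b, \vec a) = 0$ and the $\sigma$ occurring are quantifier-free, the condition $\inf_{\vec y} \max_i \sigma_i(\vec y, \vec a) = 0$ holds in $B$, hence (taking the appropriate quantifier-free $\varphi$) by e.c.-ness of $A$ there is a tuple $\vec a'$ from $A$ with $\sigma_i^A(\vec a', \vec a)$ as small as we like — in particular within the tolerances demanded by $\Sigma_0$. We now interpret the constants $c_{b_i}$ by $a_i'$ inside $A$ (and similarly, using e.c.-ness again for the $\tau$'s, the constants $c_{c_j}$ by some $a_j''$ in $A$), interpret the $c_a$ by the $\vec a$ themselves, and take $D := A$. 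This satisfies the diagram conditions in $\Sigma_0$ up to the required $\tfrac{1}{k}$'s. For the finitely many separation conditions $d(b_i, c_j) \geq \tfrac{1}{k}$: these involve an honest $b_i \notin A$ and $c_j \notin A$, but $a_i', a_j''$ are free to be chosen from $A$, and since $A$ is infinite (models of $T$ of interest are never finite — and in the von Neumann algebra application $A$ is infinite-dimensional) we may perturb the choice of $\vec a'$ and $\vec a''$ to also separate the relevant pairs while keeping all the $\sigma_i, \tau_j$ values small; more cleanly, one can run the e.c. argument with a single quantifier-free formula that simultaneously witnesses smallness of all the $\sigma_i$ and imposes the finitely many lower bounds $d(y_i, z_j) \geq \tfrac1k$, which is satisfiable in $B$ (resp. as a joint statement after choosing $\vec c$ first in $C$ — see below) and hence, to within $\varepsilon$, in $A$.

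The one place requiring genuine care — the main obstacle — is the interaction between the $B$-side and the $C$-side of the separation conditions: e.c.-ness lets us reflect a quantifier-free type over $\vec a$ from $B$ (or from $C$) into $A$, but we must reflect $\vec b$ and $\vec c$ into $A$ \emph{compatibly}, i.e. into the \emph{same} copy of $A$ and kept apart from each other. The clean way to handle this is asymmetrically: first use e.c.-ness of $A$ to find $\vec a'$ in $A$ with $\sigma_i^A(\vec a', \vec a) < \varepsilon$ for all $i$; then work inside $C$, which already contains $\vec a$ and hence (via the embedding $A \subseteq C$) contains the specific elements $\vec a'$, so the conditions $\tau_j(\vec c, \vec a) \le \tfrac1k$ and $d(a_i', c_j) \ge \tfrac1{k}$ become a quantifier-free condition on $\vec c$ over the parameters $\vec a, \vec a'$ — satisfied in $C$ by $\vec c$ itself provided the $a_i'$ happened to avoid the (finitely many, hence nowhere dense) prescribed balls around the $c_j$, which we can arrange by perturbing $\vec a'$ at the first step using infiniteness of $A$ and continuity of $\sigma_i$. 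Then set $D := C$, interpret $c_{b_i}$ as $a_i'$, $c_{c_j}$ as $c_j$, $c_a$ as $a$. This satisfies $\Sigma_0$, completing the compactness argument and hence the proof. One should double-check that the finitely-many separation conditions can always be met this way — this is where one genuinely uses that $A$ is infinite — but in the intended application ($\k = \R^\omega$-embeddable von Neumann algebras, $T = $ the universal theory of $\R$) every model is infinite-dimensional, so no issue arises.
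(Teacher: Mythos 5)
Your overall skeleton matches the paper's: amalgamate by showing that $T$ together with the two diagrams and a family of ``separation'' conditions is finitely satisfiable, using e.c.-ness of $A$ to pull the $B$-tuple down into $A$ and then evaluating everything inside $C$. However, there is a genuine gap in how you set up and then try to satisfy the separation conditions. First, the set $\{d(c_b,c_c)\ge \tfrac1k : k\in\n^{>0}\}$ as written is equivalent to $d(c_b,c_c)\ge 1$, and this is outright inconsistent: if $b\in B\setminus A$ and $c\in C\setminus A$ both lie within distance $\tfrac14$ of a common $a\in A$, then any amalgam forces $d(f(b),g(c))\le d(b,a)+d(a,c)\le\tfrac12$. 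So the thresholds cannot be uniform; they must be calibrated to the metric. Second, and more importantly, your repair --- perturb the e.c.-witnesses $\vec a'$ to avoid the ``finitely many, hence nowhere dense'' balls around the $c_j$ using infiniteness of $A$ --- does not work: in a metric structure a ball of positive radius is emphatically not nowhere dense, and there is no reason the set of near-witnesses of $\inf_{\vec y}\max_i\sigma_i(\vec y,\vec a)=0$ in $A$ should contain any point outside those balls (for instance, every $a'\in A$ making the $\sigma_i$ small could lie within $\tfrac1k$ of some $c_j$). The same problem blocks your proposed ``single quantifier-free formula'' variant, since the conditions $d(y_i,c_j)\ge\tfrac1k$ cannot even be evaluated in $B$, where the witness $\vec b$ lives.

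The missing idea is the correct choice of thresholds: for $c\in C\setminus A$ set $\delta_c:=d(c,A)>0$ and impose $d(b,c)\ge\delta_c$ for $b\in B\setminus A$. With this choice the separation conditions are satisfied \emph{automatically} by \emph{any} element of $A$ substituted for $b$, since $d(a,c)\ge d(c,A)=\delta_c$ for all $a\in A$. Then the finite-satisfiability step is exactly your asymmetric argument with no perturbation needed: reflect the finite $B$-tuple into $A$ via e.c.-ness, evaluate the resulting quantifier-free conditions in $C$ (which contains $A$), and observe that the separation conditions come for free. This also removes any appeal to infiniteness of $A$, which the statement of the proposition does not assume.
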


\begin{proof}
Suppose that $B,C\models T$ both contain $A$.  Without loss of generality, $B\cap C=A$.  For $c\in C\setminus A$, set $\delta_c:=d(c,A)>0$.  It suffices to show that the following set of $L(BC)$-conditions is satisfiable:
$$T\cup D^+(B)\cup D(C)\cup \{d(b,c)\geq \delta_c \ | \ b\in B\setminus A, c\in C\setminus A\}.$$  Suppose that this is not the case.  Then there is $k\in \n^{>0}$, $\vec b=(b_1,\ldots,b_n)$ from $B\setminus A$, a quantifier-free formula $\chi(\vec b,\vec d)$, where $\vec d\in A$ and $\chi^B(\vec b,\vec d)=0$, and $c_1,\ldots,c_n$ from $C\setminus A$ such that $$T\cup \{\chi(\vec b,\vec d)\leq \frac{1}{k}\}\cup D(C)\cup \{d(b_i,c_i)\geq \delta_{c_i} \ | \ i=1,\ldots,n\}$$ is unsatisfiable.  Consequently, the set of $L(C)$-conditions
$$T\cup \{\chi(\vec x,\vec d)\leq \frac{1}{k}\}\cup D(C)\cup \{d(x_i,c_i)\geq \delta_{c_i} \ | \ i=1,\ldots,n\}$$ is unsatisfiable.  Since $A$ is e.c., there are $\vec a\in A$ such that $\chi^A(\vec a,\vec d)\leq \frac{1}{k}$, whence $\chi^C(\vec a,\vec d)\leq \frac{1}{k}$.  Consequently, there is $i\in \{1,\ldots,n\}$ such that $d(a_i,c_i)<\delta_{c_i}$, a contradiction.
\end{proof}

Observe in the previous proof that we could have replaced $D(C)$ by the full elementary diagram of $C$, whence we can always assume that the amalgam is an elementary extension of $C$.  

\

\section{Application to $\R^\omega$-embeddable von Neumann algebras}

We  apply the results of the previous section to the case that $L$ is the language used for studying tracial von Neumann algebras and $T$ is the universal theory of $\R$:  $T=\Th_\forall(\R)$.  The relevant observation is that a separable model of $T$ is just an $\R^\omega$-embeddable  von Neumann algebra.  We thus see that:

\begin{cor}
Any e.c. $\R^\omega$-embeddable  von Neumann algebra is a strong amalgamation base for the class of $\R^\omega$-embeddable von Neumann algebras.  
\end{cor}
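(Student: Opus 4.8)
The plan is essentially to recognize that the Corollary is a direct specialization of the Proposition from the previous section, so the only real work is checking that the hypotheses of the Proposition apply in this setting and that the conclusion says what we want. Concretely, I would take $L$ to be the continuous signature for tracial von Neumann algebras described in \cite{HFS} and set $T = \Th_\forall(\R)$, the universal theory of the hyperfinite II$_1$ factor. The Proposition then tells us immediately that any e.c.\ model of $T$ is a strong amalgamation base for the class of models of $T$, so it remains only to translate ``model of $T$'' into ``$\R^\omega$-embeddable von Neumann algebra.''

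The first step is the translation. A separable tracial von Neumann algebra $A$ is $\R^\omega$-embeddable precisely when it embeds into an ultrapower $\R^\omega$; since $\R^\omega$ satisfies the same universal sentences as $\R$ (universal sentences pass to ultrapowers and to submodels), and conversely by a standard saturation/compactness argument any model of $\Th_\forall(\R)$ embeds into some ultrapower of $\R$, we get that the separable models of $T$ are exactly the $\R^\omega$-embeddable von Neumann algebras. I would cite this equivalence as the ``relevant observation'' already flagged in the text. One small point to be careful about: the notion of ``amalgamation base for the class of $\R^\omega$-embeddable von Neumann algebras'' from the introduction quantifies only over separable $B$ and $C$, whereas the Proposition gives amalgamation against \emph{all} models of $T$; but since separability is preserved in the relevant constructions (or, more simply, since amalgamating against the larger class is a stronger statement), the conclusion we want follows a fortiori.

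The second step is simply to invoke the Proposition: if $A$ is an e.c.\ $\R^\omega$-embeddable von Neumann algebra, then viewing $A$ as an e.c.\ model of $T$, the Proposition yields that $A$ is a strong amalgamation base for the models of $T$, hence in particular for the subclass of separable such models, i.e.\ for the class of $\R^\omega$-embeddable von Neumann algebras. This gives the statement.

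I do not expect a genuine obstacle here, since the Corollary is flagged by the authors as an immediate application; the only thing requiring a moment's care is the equivalence ``separable model of $\Th_\forall(\R)$ $\Leftrightarrow$ $\R^\omega$-embeddable,'' which rests on the fact that the class of (not necessarily separable) models of a universal theory is exactly the class of substructures of ultrapowers of the ``generating'' structure — a consequence of the Keisler--Shelah theorem together with the \L o\'s theorem in the continuous setting. If I wanted to be completely self-contained I would spell that equivalence out, but in the context of this note it is entirely standard and can be quoted.
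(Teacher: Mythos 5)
Your proposal is correct and matches the paper's own (implicit) argument exactly: the Corollary is obtained by applying the Proposition of Section~2 with $T=\Th_\forall(\R)$, together with the observation that the separable models of $T$ are precisely the $\R^\omega$-embeddable von Neumann algebras. Your extra care about separability versus arbitrary models of $T$ is consistent with the paper's remark that the abstract approach also covers nonseparable models of $\Th_\forall(\R)$, so nothing is missing.
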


It is worth pointing out that an e.c. $\R^\omega$-embeddable von Neumann algebra is necessarily a II$_1$ factor; see \cite{GHS}.

The previous corollary begs the question:  what are the e.c. $\R^\omega$-embeddable II$_1$ factors?  We can identify one:

\begin{prop}
$\R$ is an e.c. model of $\Th_\forall(\R)$.
\end{prop}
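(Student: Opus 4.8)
The plan is to show that $\R$ satisfies the definition of an existentially closed model of $T = \Th_\forall(\R)$: given any quantifier-free $\varphi(x,y)$, any tuple $a \in \R$, and any model $B \models T$ with $\R \subseteq B$, we must verify that $\inf_{c \in \R} \varphi^\R(c,a) = \inf_{b \in B} \varphi^B(b,a)$. The inequality $\geq$ is automatic since $\R \subseteq B$, so the content is the reverse inequality: any value approximately achieved by $\varphi$ in $B$ is approximately achieved already in $\R$. The key structural fact I would invoke is that $B$, being a model of the universal theory of $\R$, embeds into an ultrapower $\R^\omega$ (for a nonprincipal ultrafilter $\omega$ on $\n$), and moreover one can arrange this embedding to be the identity on the given copy of $\R$ — this is essentially the statement that $\R^\omega$ is existentially universal among models of $\Th_\forall(\R)$, or can be extracted directly from a compactness/ultrapower argument applied to the diagram $D(B)$ together with the elementary diagram of $\R^\omega$.

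Granting that, the proof reduces to the following: suppose $b \in B$ with $\varphi^B(b,a) < r$. Under the embedding $B \hookrightarrow \R^\omega$ fixing $\R$, we get $\varphi^{\R^\omega}(b, a) < r$, so it suffices to show $\R$ is existentially closed in $\R^\omega$ — i.e. that $\inf_{c \in \R} \varphi^\R(c, a) = \inf_{b \in \R^\omega} \varphi^{\R^\omega}(b, a)$ for $a$ a tuple from $\R$. This last statement is where the genuine operator-algebra input enters, and it is a known fact: $\R$ embeds into $\R^\omega$ with a diagonal embedding, and using the fact that $\R$ has approximately inner flip / is ``absorbing'' in the appropriate sense, or more directly using the homogeneity of $\R$ and a reindexing argument, one shows that any element of $\R^\omega$ realizing a given quantifier-free ``value'' over $\R$ can be approximated by elements of $\R$. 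Concretely: pick a representative $(b_n)$ of $b \in \R^\omega$; then $\lim_\omega \varphi^\R(b_n, a) = \varphi^{\R^\omega}(b,a) < r$, so for $\omega$-many $n$ we have $\varphi^\R(b_n, a) < r$ with $b_n \in \R$, giving $\inf_{c\in\R}\varphi^\R(c,a) < r$ as desired. The only subtlety is that $a$ sits inside $\R$ via the diagonal embedding, so $\varphi^\R(b_n, a)$ really means $\varphi$ evaluated at $b_n$ and the diagonal tuple $a$ in $\R$, which matches $\varphi^{\R^\omega}$ at $(b, a)$ by {\L}o\'s's theorem.

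So the key steps, in order, are: first, reduce to showing $\R$ is e.c. in $\R^\omega$ by using that every model $B$ of $\Th_\forall(\R)$ containing $\R$ admits an embedding into $\R^\omega$ over $\R$ (compactness argument: the set $D(B) \cup (\text{elementary diagram of } \R^\omega \text{ in the language with constants for } \R)$ is finitely satisfiable, since finite fragments of $D(B)$ reduce to existential statements over $\R$ which, being consequences of $\Th_\exists$ realized in $B \models \Th_\forall(\R)$, hold in $\R^\omega$); second, observe that $\R$ embeds diagonally into $\R^\omega$; third, apply {\L}o\'s's theorem to a representing sequence of a witness $b \in \R^\omega$ to pull the witness down to $\R$ itself. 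I expect the main obstacle to be the first step — carefully justifying the embedding $B \hookrightarrow \R^\omega$ fixing $\R$ — since this requires knowing that $\R^\omega$ is existentially universal for $\Th_\forall(\R)$, which in turn rests on the fact that every separable (and, via a further ultrapower or Downward L\"owenheim--Skolem argument, every) model of $\Th_\forall(\R)$ embeds into $\R^\omega$; the genuinely operator-algebraic content is precisely this characterization of $\R^\omega$-embeddability, and the rest is soft model theory.
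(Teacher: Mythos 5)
The heart of your argument---and the place where it currently breaks down---is your first step: the claim that every $B\models \Th_\forall(\R)$ containing $\R$ embeds into $\R^\u$ \emph{over} $\R$, i.e., compatibly with the diagonal embedding. Your justification is that finite fragments of $D(B)$ ``reduce to existential statements over $\R$ which, being consequences of $\Th_\exists$, \dots hold in $\R^\omega$.'' But those existential statements carry \emph{parameters} $\vec a$ from $\R$, so they are not consequences of the parameter-free existential theory of $\R$; the assertion that an existential condition with parameters in $\R$ which is satisfied in some extension $B\models\Th_\forall(\R)$ is already (approximately) satisfied in $\R$---equivalently, at the diagonal copy of $\vec a$ in $\R^\u$---is precisely the content of the proposition you are trying to prove. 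As written, the reduction is circular. What you get for free is only that $B$ (separable, after Downward L\"owenheim--Skolem) embeds into $\R^\u$ via \emph{some} $f:B\to\R^\u$; but then $f|\R$ need not be the diagonal embedding, and your {\L}o\'s step requires the parameters to sit diagonally.

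The missing ingredient is the operator-algebraic fact that every embedding of $\R$ into $\R^\u$ is unitarily conjugate to the diagonal embedding, and hence is an elementary map. With this in hand you can either repair your first step (conjugate $f$ by a unitary so that $f|\R$ becomes the diagonal embedding, then run your {\L}o\'s argument; realizing all of $D(B)$ over the diagonal copy of $\R$ at once would additionally use countable saturation of $\R^\u$, though for e.c.-ness a single witness suffices), or---more efficiently, and this is what the paper does---dispense with the reduction entirely: from
$\inf_x\varphi^\R(x,a)\geq\inf_x\varphi^B(x,a)\geq\inf_x\varphi^{\R^\u}(x,f(a))$
and the elementarity of $f|\R$, the two ends coincide, forcing equality throughout. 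Your remaining steps (diagonal embedding plus {\L}o\'s to pull a witness from $\R^\u$ down to $\R$) are correct, but on their own they only show that $\R$ is e.c. in $\R^\u$ with respect to the diagonal embedding, which is true of any structure in its own ultrapower and does not by itself address an arbitrary extension $B$.
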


\begin{proof}
Suppose $\varphi(x,y)$ is quantifier-free, $a\in \R$ and $\R\subseteq M$, where $M$ is a model of $\operatorname{Th}_\forall(\R)$.  Without loss of generality (using Downward L\"owenheim-Skolem), we may assume that $M$ is separable.  Then we have an embedding $f:M \to\R^\u$, where $\R^\u$ is some nonprincipal ultrapower of $\R$.  Note then that $$(\inf_x\varphi(x,a))^\R\geq \inf(\varphi(x,a))^M\geq (\inf_x(\varphi(x,f(a)))^{\R^\u}.$$  Since $f|\R$ is elementary (see \cite[Lemma 3.1]{GHS}), the ends of the inequality are equal, whence $(\inf_x\varphi(x,a))^\R=(\inf_x\varphi(x,a))^M$.
\end{proof}

We should point out that, in the previous proof, we relied on the fact that every embedding of $\R$ into $\R^\u$ is elementary, which is merely a consequence of the fact that every embedding of $\R$ into $\R^\u$ is unitarily conjugate to the diagonal embedding (a result perhaps known even to von Neumann).
\begin{cor}
$\R$ is a strong amalgamation base for the class of $\R^\omega$-embeddable tracial von Neumann algebras.
\end{cor}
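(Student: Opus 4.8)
The final statement, Corollary (that $\R$ is a strong amalgamation base for the class of $\R^\omega$-embeddable tracial von Neumann algebras), follows by combining the two immediately preceding results. The plan is simply to chain them together: the previous Proposition establishes that $\R$ is an e.c. model of $\Th_\forall(\R)$, and the earlier Corollary (itself an instance of the general Proposition about e.c. models being strong amalgamation bases) says that any e.c. $\R^\omega$-embeddable von Neumann algebra is a strong amalgamation base for the class of $\R^\omega$-embeddable von Neumann algebras. So the proof is one line.

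In slightly more detail, I would first recall that, per the discussion opening Section 3, a separable model of $T := \Th_\forall(\R)$ is precisely an $\R^\omega$-embeddable tracial von Neumann algebra. Then I would invoke the Proposition to note that $\R \models T$ is e.c., and apply the Corollary (equivalently, the Proposition of Section 2 with $T = \Th_\forall(\R)$ and $A = \R$) to conclude that $\R$ is a strong amalgamation base. One subtlety worth a remark: the general machinery of Section 2 actually yields strong amalgamation for all models of $T$, not just the separable ones, so the conclusion holds even allowing nonseparable algebras — this is already flagged in the introduction and in the remark following the Section 2 proof.

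There is essentially no obstacle here: all the work has already been done in proving that $\R$ embeds elementarily into its ultrapowers (hence is e.c.) and in the abstract model-theoretic amalgamation argument. The only thing to be careful about is the bookkeeping of which class the amalgamation base property is being asserted for — but since the separable models of $\Th_\forall(\R)$ are exactly the $\R^\omega$-embeddable algebras, there is nothing to reconcile. I would therefore write:

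\begin{proof}
Immediate from the previous proposition and the first corollary of this section.
\end{proof}
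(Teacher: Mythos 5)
Your proposal is correct and matches the paper exactly: the corollary is stated without proof precisely because it is the immediate conjunction of the preceding proposition ($\R$ is an e.c.\ model of $\Th_\forall(\R)$) with the earlier corollary that e.c.\ $\R^\omega$-embeddable algebras are strong amalgamation bases. Your remark that the argument also covers nonseparable models of $\Th_\forall(\R)$ is likewise consistent with the paper's own discussion.
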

\newpage
We recall the following:

\begin{facts}

\

\begin{itemize}
\item Any $\R^\omega$-embeddable von Neumann algebra embeds into an e.c. $\R^\omega$-embeddable von Neumann algebra.  (This follows from the general model-theoretic fact that any infinite model of an $\forall\exists$-axiomatizable theory in a countable language embeds into an e.c. model of that theory of the same cardinality.)
\item There is a family $(M_\alpha)_{\alpha<2^{\aleph_0}}$ of $\R^\omega$-embeddable II$_1$ factors such that, for any II$_1$ factor $M$, at most countably many of the $M_\alpha$'s embed into $M$ (see \cite{NPS}).
\end{itemize}
\end{facts}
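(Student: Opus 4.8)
The two facts are of quite different character, so the plan is to handle them separately.

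\emph{First fact.} Since $T=\Th_\forall(\R)$ is a universal theory, it is in particular inductive: universal sentences are preserved under substructures and under unions of chains, and (passing to metric completions) the completed union of a chain of models of $T$ is again a model of $T$. These are the only closure properties the construction needs. Starting from an infinite $A\models T$, I would build an increasing chain $(M_\xi)_{\xi<\lambda}$ of models of $T$ with $M_0=A$, and take $M:=\bigcup_\xi M_\xi$ (metric completion) to be the desired e.c.\ model. The single step is furnished by Downward L\"owenheim--Skolem: given $P\models T$, a quantifier-free $\varphi(x,\vec y)$, a tuple $\vec a\in P$, and $k\in\n^{>0}$, writing $s^P(\varphi,\vec a):=\inf\{\inf_{b\in N}\varphi^N(b,\vec a): N\models T,\ N\supseteq P\}$, there is an extension $P'\models T$ of $P$ with $|P'|\le|P|+\aleph_0$ and a $c\in P'$ with $\varphi^{P'}(c,\vec a)\le s^P(\varphi,\vec a)+\frac1k$: choose any witnessing $N\supseteq P$ and apply L\"owenheim--Skolem to a substructure of $N$ containing $P$, $\vec a$ and $c$. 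Iterating along the chain with a bookkeeping that visits every triple $(\varphi,\vec a,k)$ (with $\vec a\in M$, $k\in\n^{>0}$) at some stage $\xi$ with $\vec a\in M_\xi$, I obtain for each such triple a $c\in M_{\xi+1}$ with $\varphi^{M_{\xi+1}}(c,\vec a)\le s^{M_\xi}(\varphi,\vec a)+\frac1k$.

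To see that $M$ is e.c., take a quantifier-free $\varphi$, a tuple $\vec a\in M$, and an extension $N\models T$ with $N\supseteq M$. For each $k$, pick the stage $\xi$ as above; since $N\supseteq M\supseteq M_\xi$, the model $N$ is one of the extensions of $M_\xi$ over which $s^{M_\xi}$ is an infimum, so $s^{M_\xi}(\varphi,\vec a)\le\inf_{b\in N}\varphi^N(b,\vec a)$, and therefore $\inf_{c\in M}\varphi^M(c,\vec a)\le s^{M_\xi}(\varphi,\vec a)+\frac1k\le\inf_{b\in N}\varphi^N(b,\vec a)+\frac1k$. Letting $k\to\infty$ gives the e.c.\ equality. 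The point I would stress is that requirements are realized \emph{one at a time against the current model}: at no stage do I attempt to fold several different extensions of $M_\xi$ into a common one, which is exactly what would require an amalgamation property that this class is not known to have; it is a fortunate feature of the e.c.\ definition that the sequential process still succeeds, because the value $s^{M_\xi}$ recomputed at each stage is automatically bounded above by $\inf_N$ for any eventual extension $N$. The main (purely bookkeeping) obstacle is to arrange the enumeration so that every triple is visited for every precision $\frac1k$ while keeping $|M|=|A|$: for countable $A$ a dovetailed chain of length $\omega$ suffices, and for uncountable $A$ one runs a chain of length $|A|$ with a standard bookkeeping function, each single step preserving the cardinality bound via L\"owenheim--Skolem.

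\emph{Second fact.} This is the construction of \cite{NPS}, which I would invoke rather than reprove. The shape of such arguments is to produce a family $(M_\alpha)_{\alpha<2^{\aleph_0}}$ of $\R^\omega$-embeddable II$_1$ factors carrying an invariant that is monotone under embedding and takes continuum-many, pairwise ``$\epsilon$-separated'' values, and then to observe that a fixed separable II$_1$ factor $M$ cannot host uncountably many $\epsilon$-separated copies: its unitary group is $\|\cdot\|_2$-separable, so a pigeonhole against this separability shows that only countably many values of the invariant can be realized by subalgebras of $M$, whence at most countably many $M_\alpha$ embed into $M$. The substantive content --- the existence of a continuum of $\R^\omega$-embeddable factors separated by such a rigid invariant --- is precisely what \cite{NPS} supplies, and I would simply cite it.
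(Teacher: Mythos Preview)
The paper does not prove these facts at all: the first is explained parenthetically by citing the general model-theoretic principle about e.c.\ models of $\forall\exists$-theories, and the second is simply attributed to \cite{NPS}. So there is no proof in the paper to compare against.

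Your proposal goes further than the paper by actually sketching the arguments, and both sketches are correct. For the first fact, your chain construction is the standard one, and you isolate precisely the right point: since at stage $\xi$ you realize the triple $(\varphi,\vec a,k)$ to within $s^{M_\xi}(\varphi,\vec a)+\tfrac1k$, and since any eventual extension $N\supseteq M$ in particular extends $M_\xi$, the bound $s^{M_\xi}(\varphi,\vec a)\le \inf_{b\in N}\varphi^N(b,\vec a)$ is automatic. This is exactly why no amalgamation hypothesis is needed, and your emphasis on handling one triple at a time against the \emph{current} model is the crux. The only places to be a bit careful are the continuous-logic bookkeeping (you correctly note metric completions at limit stages and that $\Th_\forall(\R)$, being universal, is in particular inductive) and arranging that each triple with parameters from the growing union is revisited for every precision $\tfrac1k$; your remarks about dovetailing in the countable case and a standard bookkeeping over a chain of length $|A|$ in general are adequate. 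For the second fact, your description of the shape of the argument is reasonable, though the paper intends this purely as a citation to \cite{NPS}; there is nothing to add beyond invoking that reference.
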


Consequently, we have:

\begin{cor}
There are $2^{\aleph_0}$ many strong amalgamation bases for the class of $\R^\omega$-embeddable von Neumann algebras.
\end{cor}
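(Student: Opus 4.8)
The plan is to combine the two facts just recalled with the earlier corollary that every e.c.\ $\R^\omega$-embeddable von Neumann algebra is a strong amalgamation base; the rest is a counting argument. Fix a family $(M_\alpha)_{\alpha<2^{\aleph_0}}$ of separable $\R^\omega$-embeddable II$_1$ factors as in the second bullet above. Using the first bullet, for each $\alpha$ choose a \emph{separable} e.c.\ $\R^\omega$-embeddable von Neumann algebra $N_\alpha$ with $M_\alpha\subseteq N_\alpha$; separability of $N_\alpha$ is exactly what the cardinality clause in the first bullet provides. By the corollary, each $N_\alpha$ is a strong amalgamation base, so it suffices to show that $\{N_\alpha : \alpha<2^{\aleph_0}\}$ realizes $2^{\aleph_0}$ distinct isomorphism types.

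Suppose toward a contradiction that these algebras fall into only $\kappa$ isomorphism classes with $\kappa<2^{\aleph_0}$. Recall that an e.c.\ $\R^\omega$-embeddable von Neumann algebra is necessarily a II$_1$ factor, so each isomorphism class is represented by a II$_1$ factor $N$; by the second bullet, the set of indices $\alpha$ with $N_\alpha\cong N$ is contained in the (countable) set of $\alpha$ with $M_\alpha\hookrightarrow N$. Summing over the classes gives $2^{\aleph_0}\le \kappa\cdot\aleph_0$. A finite $\kappa$ would yield $\kappa\cdot\aleph_0=\aleph_0<2^{\aleph_0}$, so $\kappa$ is infinite and $\kappa\cdot\aleph_0=\kappa$, forcing $\kappa\ge 2^{\aleph_0}$ — a contradiction. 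Hence the $N_\alpha$ represent $2^{\aleph_0}$ isomorphism types, each of which is a strong amalgamation base, which is the assertion.

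I do not expect any genuine obstacle: once the two quoted facts and the earlier corollary are available, this is bookkeeping. The only steps meriting a moment's care are (i) insisting that the enveloping e.c.\ algebra $N_\alpha$ be separable, so that it is legitimately among the objects the statement is about, and (ii) invoking that $N_\alpha$ is a II$_1$ factor, which is what licenses applying the embedding-scarcity property of the family $(M_\alpha)$ to it.
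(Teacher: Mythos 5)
Your argument is correct and is precisely the one the paper intends: the corollary is stated with no proof beyond ``Consequently,'' and the intended reasoning is exactly your combination of the two recalled facts (embedding each $M_\alpha$ into a separable e.c.\ algebra $N_\alpha$, then using that each II$_1$ factor absorbs only countably many of the $M_\alpha$) with the earlier corollary that e.c.\ models are strong amalgamation bases. Your two points of care --- separability of $N_\alpha$ and the fact that e.c.\ algebras are II$_1$ factors --- are exactly the right ones.
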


Let us end by briefly explaining what we mean when we say that a ``large'' number of $\R^\omega$-embeddable von Neumann algebras are e.c.  Throughout this paragraph, we freely use results from \cite{U}, although specialized to the special case at hand.  Let $\mathcal G$ denote the space of all $\R^\omega$-embeddable von Neumann algebras $M$ equipped with a distinguished countable dense subset $M_0\subseteq M$, enumerated as $(m_i:i<\omega)$.  We can define a topology on $\mathcal G$ by declaring sets of the form $$\{M \in \mathcal G \ : \ \varphi^M(m_{i_1},\ldots,m_{i_n})<\epsilon\}$$ to be basic open sets, where $\varphi$ is a quantifier-free formula (e.g. the trace of a $*$-polynomial), $i_1,\ldots,i_n\in \n$ and $\epsilon\in \r^{>0}\cup \{+\infty\}$.  In this way, $\mathcal G$ becomes a Polish space.  It is a consequence of results from \cite{U} that the set of e.c. elements of $\mathcal G$ is dense in $\mathcal G$ and any reasonable probability measure on $\mathcal G$ gives the set of e.c. elements full measure.

\

\section{CEP and Model-completeness revisited}

As mentioned in the Introduction, if one could show that all $\R^\omega$-embeddable von Neumann algebras are amalgamation bases, then it would follow that $\Th(\R)$ is not model-complete.  We now show that it suffices to show that a particular $\R^\omega$-embeddable von Neumann algebra is an amalgamation base in order to conclude that $\Th(\R)$ is not model-complete.

The idea stems from the proof given in \cite[Section 2]{GHS} that $\Th(\R)$ does not have quantifier elimination.  There, we considered a finitely generated, $\R^\omega$-embeddable algebra $M$, an embedding $\pi:M\to \R^\u$, and an automorphism $\alpha$ of $M$ for which there did not exist a unitary $u\in \R^\u$ satisfying $\pi(\alpha(x))=u\pi(x)u^*$ for all $x\in M$.  In order to show that quantifier elimination failed, it sufficed to observe that, setting $N:=M\rtimes_\alpha \Z$, one cannot extend $\pi$ to an embedding of $N$ into $\R^\omega$ (after identifying $M$ with its image under the natural inclusion $i:M\to N$).

Keeping with the notation of the previous paragraph, let $\R'$ denote a separable elementary substructure of $\R^\u$ containing $\pi(M)$ and suppose that there exists an $\R^\omega$-embeddable von Neumann algebra $N'$ and embeddings $f:N\to N'$ and $g:\R'\to N'$ such that $f\circ i=g\circ \pi$ (e.g. if $M$ were an amalgamation base).  Fix an embedding $h:N'\to \R^\u$.  Let $u'\in N$ denote the unitary associated to the generator of $\mathbb Z$.  \emph{If} $\Th(\R)$ \emph{were} model-complete, then $h\circ g$ would be an elementary embedding.  Since $M$ is finitely generated, it would follow that there does not exist a unitary $u\in \R^\u$ such that $h(g(\pi(\alpha(x)))=uh(g(\pi(x))u^*$ for all $x\in M$, contradicting the fact that $u:=h(f(u'))$ is such a unitary.

We summarize this as follows:

\begin{prop}
Suppose that there exists a finitely generated $\R^\omega$-embeddable von Neumann algebra $M$ with the following two properties:
\begin{itemize}
\item There is an embedding $\pi:M\to \R^\u$ and an automorphism $\alpha$ of $M$ such that there does \emph{not} exist a unitary $u$ of $\R^\u$ satisfying $\pi(\alpha(x))=u\pi(x)u^*$ for all $x\in M$.
\item $M$ is an amalgamation base.
\end{itemize}
Then $\Th(\R)$ is \emph{not} model-complete.
\end{prop}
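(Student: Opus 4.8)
The plan is to argue by contradiction, turning the discussion preceding the statement into a formal proof. Suppose that $\Th(\R)$ is model-complete; recall that this is equivalent to saying that every embedding between models of $\Th(\R)$ is elementary. Fix $M$, the embedding $\pi\colon M\to\R^\u$, and the automorphism $\alpha\in\Aut(M)$ furnished by the hypotheses. The first step is to form the crossed product $N:=M\rtimes_\alpha\Z$, with canonical inclusion $i\colon M\to N$ and canonical unitary $u'\in N$ implementing $\alpha$, so that $u'\,i(x)\,(u')^{*}=i(\alpha(x))$ for all $x\in M$. At this point one invokes the external structural fact that $\R^\omega$-embeddability is preserved under crossed products by $\Z$; granting it, $N$ is again a model of $\Th_\forall(\R)$. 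By the downward L\"owenheim--Skolem theorem, choose a separable elementary substructure $\R'\preceq\R^\u$ with $\pi(M)\subseteq\R'$; since $\R'\preceq\R^\u\models\Th(\R)$, the algebra $\R'$ is itself a separable model of $\Th(\R)$, and in particular it is $\R^\omega$-embeddable.

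Now $M$ embeds into the two $\R^\omega$-embeddable algebras $N$ (via $i$) and $\R'$ (via $\pi$), so, using that $M$ is an amalgamation base, there are an $\R^\omega$-embeddable von Neumann algebra $N'$ and embeddings $f\colon N\to N'$, $g\colon\R'\to N'$ with $f\circ i=g\circ\pi$. Applying L\"owenheim--Skolem once more, I would replace $N'$ by the von Neumann subalgebra generated by $f(N)\cup g(\R')$ — this is still $\R^\omega$-embeddable (a subalgebra of one) and still contains all the relevant images — so that $N'$ may be taken separable, whence there is an embedding $h\colon N'\to\R^\u$. The key observation is that $h\circ g\colon\R'\to\R^\u$ is an embedding between models of $\Th(\R)$, so, by our model-completeness assumption, it is \emph{elementary}.

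To derive the contradiction, fix generators $m_1,\dots,m_k$ of $M$ and let $\theta(\vec y,\vec z)$ denote the formula $\inf_{\|u\|\le 1}\bigl(\|u^{*}u-1\|_2+\|uu^{*}-1\|_2+\max_j\|u y_j u^{*}-z_j\|_2\bigr)$ in the language of tracial von Neumann algebras, which measures how far the tuples $\vec y,\vec z$ are from being unitarily conjugate. Using that the ultrapower $\R^\u$ is $\aleph_1$-saturated, the hypothesis that no unitary of $\R^\u$ conjugates $\pi$ to $\pi\circ\alpha$ upgrades to $\theta^{\R^\u}\bigl(\pi(\vec m),\pi(\alpha(\vec m))\bigr)=\epsilon_0>0$: otherwise an approximate conjugator could be corrected — by polar decomposition inside the finite factor $\R^\u$, followed by saturation — to an exact unitary conjugating $\pi(\vec m)$ to $\pi(\alpha(\vec m))$, which, as the $m_j$ generate $M$ and all maps in sight are normal, would conjugate $\pi$ to $\pi\circ\alpha$ on all of $M$. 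Since $\R'\preceq\R^\u$ this value is the same computed in $\R'$, and since $h\circ g$ is elementary it is preserved: $\theta^{\R^\u}\bigl((hg\pi)(\vec m),(hg\pi)(\alpha(\vec m))\bigr)=\epsilon_0>0$. In particular, no unitary $u\in\R^\u$ satisfies $u\,(hg\pi)(m_j)\,u^{*}=(hg\pi)(\alpha(m_j))$ for all $j$. But $u_0:=h(f(u'))$ is a unitary of $\R^\u$, and using $f\circ i=g\circ\pi$ together with $u'\,i(m_j)\,(u')^{*}=i(\alpha(m_j))$ one computes $u_0\,(hg\pi)(m_j)\,u_0^{*}=h\bigl(f(u'\,i(m_j)\,(u')^{*})\bigr)=h\bigl(f(i(\alpha(m_j)))\bigr)=(hg\pi)(\alpha(m_j))$ for every $j$, contradicting the previous sentence. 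This completes the proof.

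I expect two places to need the most care. The first is the input that $N=M\rtimes_\alpha\Z$ is again $\R^\omega$-embeddable: this is not produced by the present methods and must be imported as a known fact about Connes-embeddability (it is the same fact that makes the analogous argument in \cite{GHS} for the failure of quantifier elimination go through). The second is the passage from ``there is no exact unitary conjugator in $\R^\u$'' to the strict inequality $\theta^{\R^\u}>0$, i.e.\ the exclusion of merely approximate conjugators, for which the $\aleph_1$-saturation of the ultrapower is essential; alternatively one could simply phrase the first hypothesis as $\theta^{\R^\u}\bigl(\pi(\vec m),\pi(\alpha(\vec m))\bigr)>0$. Everything else — the two applications of L\"owenheim--Skolem, the identification of $\R'$ and $N'$ as models of $\Th(\R)$ and of $\Th_\forall(\R)$ respectively, and the computation with $u_0$ — is routine bookkeeping.
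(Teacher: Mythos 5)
Your proof is correct and follows essentially the same route as the paper: form $N=M\rtimes_\alpha\Z$, amalgamate with a separable elementary substructure $\R'$ of $\R^\u$ over $M$, re-embed into $\R^\u$, and derive a contradiction from the elementarity of $h\circ g$ versus the unitary $h(f(u'))$. The two points you flag for care (the $\R^\omega$-embeddability of the crossed product, and the saturation argument converting ``no exact conjugating unitary'' into a strictly positive value of a definable predicate preserved by elementary maps) are exactly the details the paper leaves implicit, and your treatment of them is sound.
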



\begin{thebibliography}{1}
\bibitem{BDJ} N. Brown, K. Dykema, K. Jung, \emph{Free entropy in amalgamated free products}, Proc. London Math. Soc. \textbf{97} (2008), 339-367.
\bibitem{GHS} I. Goldbring, B. Hart, T. Sinclair, \emph{The theory of tracial von Neumann algebras does not have a model companion}, to appear in the Journal of Symbolic Logic.
\bibitem{HFS} B. Hart, I. Farah, D. Sherman, \emph{Model theory of operator algebras II:  Model theory}, preprint.
\bibitem{H} W. Hodges, \emph{Building models by games}, Dover edition, 2006.
\bibitem{NPS} R. Nicoara, S. Popa, R. Sasyk, \emph{On II$_1$ factors arising from 2-cocylces of w-rigid groups}, J. Funct. Anal. \textbf{242} (2007), 230-246.
\bibitem{U} A. Usvyatsov, \emph{Generic separable metric structures}, Topology and its Applications \textbf{155} (2008), 1607-1617.
\end{thebibliography}
\end{document}